 \renewcommand{\equation}
\newtheorem{prop}{Proposition}
\newtheorem{lem}{Lemma}
\newtheorem{thm}{Theorem}
\newtheorem{cor}{Corollary}
\newtheorem{fact}{Fact}
\newtheorem{defini}{Definition}
\title{Lens spaces which are realizable as closures of homology cobordisms over planar surfaces}
\author{Nozomu Sekino}
\date{}
\begin{document}
\maketitle

\begin{abstract}
We determine the condition on a given lens space having a realization as a closure of homology cobordism over a planar surface with a given number of boundary components. As a corollary, we see that every lens space is represented as a closure of homology cobordism over a planar surface with three boundary components. In the proof of this corollary, we use Chebotarev density theorem.
\end{abstract}

\section{Introduction} \label{sec1}
It is known that every connected orientable 3-manifold $M$ has a fibered link $L$. 
This is equivalent to that $M$ admits a surface $F$ bounding a link $L$ such that $M \setminus (N(L) \cup N(F))$ is homeomorphic to $F \times [0,1]$, where $N(\cdot)$ stands for a  (sufficiently small) open tubular neighborhood. 
However, finding fibered links in a given 3-manifold is difficult in general. 
Since existence of a fiber surface of fixed type in a manifold concerns with the structure of the manifold, there are many works about these. 
About fibered links with planar fiber surface in lens spaces, 
lens space of type $(p,q)$ has such a fibered link whose number of components is as the same number as one plus the length of a continued fraction expansion of $\frac{p}{q}$. 
However it is unknown that there exists a universal upper bound for the number of boundary components of planar fibered surface for every lens space. \\
There is an analogue of fibered links, $\it{homologically \ fibered \ links}$ \cite{homfib}; 
A link $L$ in a 3-manifold $M$ is called a $\it{homologically \ fibered \ link}$ 
if there exists a Seifert surface $F$ of $L$ such that $M \setminus (N(L) \cup N(F))$ is homeomorphic as sutured manifold to a homology cobordism over the surface. 
This is equivalent to that $M$ is obtained by pasting the upper boundary and the lower boundary of a homology cobordism over the surface. 
We may get refinements of notions concerning with surface cross intervals for homology cobordisms. (See \cite{hc} for example.) 
Recently, Nozaki \cite{nozaki} showed that every lens space contains a genus one homologically fibered knot by solving some algebraic equations. 
In this paper, like \cite{nozaki}, we give the relation between the existence of homologically fibered link of a given number of components whose Seifert surface is a planar surface in a given lens space and some algebraic equation as follows. 
We denote by $L(p,q)$ a lens space of type $(p,q)$, and $\Sigma_{g,n}$ a surface of genus $g$ with $n$ boundary components.\\

 \begin{prop} \label{prop}
  $L(p,q)$ has a realization as a closure of a homology cobordism over $\Sigma_{0,n+1}$ ($n\geq 1$)
if and only if there exist integers $a_{h}$, $l_{i,j}$ and $t_k$($h,i,j,k=1,2,\dots,n$, $i\lneq j$) satisfying following equation: \\

\begin{eqnarray}
\left|
\begin{array}{ccccc}
   p      &   -qa_{1}  &   -qa_{2}   & \cdots &   -qa_{n}\\
   a_{1}  &     t_{1}    &   l_{1,2}    &  \cdots &   l_{1,n} \\
   a_{2}  &    l_{1,2}   &    t_{2}     & \cdots  &   l_{2,n}\\
\vdots &  \vdots    &   \vdots  &  \ddots & \vdots \\
   a_{n}  &   l_{1,n}    &     l_{2,n}  &  \cdots &     t_{n}\\
\end{array}
\right| = \pm 1 
\end{eqnarray}

\end{prop}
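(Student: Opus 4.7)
My plan is to build a surgery-theoretic dictionary between closures of homology cobordisms over $\Sigma_{0,n+1}$ and the integer data captured by the matrix $A$, and then translate the lens-space condition into the stated equation. I would start by giving a normal form for a homology cobordism $C$ over $\Sigma_{0,n+1}$: every such $C$ can be produced from $\Sigma_{0,n+1}\times[0,1]$ by attaching $n$ two-handles along a homologically trivial framed link $\gamma_1\sqcup\cdots\sqcup\gamma_n$ in the interior, whose framings $t_k$ and mutual linking numbers $l_{i,j}$ (measured via Seifert surfaces in the product) assemble into the symmetric $n\times n$ block $B$ at the bottom right of $A$. The closure itself is obtained by pasting the upper and lower copies of $\Sigma_{0,n+1}$ — producing a $3$-manifold with $(n+1)$ torus boundary components — and then Dehn-filling these tori; for the outcome to be a lens space one distinguished torus must be filled with slope $p/q$, and the linking numbers $a_i$ between that filling curve and the $\gamma_i$'s populate the first column and row of $A$. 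The asymmetric $-q$ in the first row arises from clearing the denominator in the rational-surgery relation.

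Next, I would use Mayer--Vietoris on the decomposition $M=C\cup N(F)\cup N(L)$ coming from the homologically fibered link structure to obtain a presentation of $H_1(M)$ in terms of these data, together with the induced linking form on $H_1(M)$. A Schur-complement computation then yields
\[
\det A \;=\; p\det B + q\,\vec a^{\,T}\operatorname{adj}(B)\,\vec a.
\]
The classification of lens spaces by first homology together with the linking form on $H_1$ lets me translate ``$M = L(p,q)$'' into the equation $\det A=\pm 1$: one piece of the condition encodes that the underlying cobordism really is a $\mathbb Z$-homology cobordism, and the rest fixes the linking form of the closure to that of $L(p,q)$ specifically. For the converse direction, given integers satisfying $\det A=\pm 1$, I would reverse the construction — attaching two-handles to $\Sigma_{0,n+1}\times[0,1]$ according to $B$ and performing the prescribed Dehn fillings — and verify via Kirby calculus that the resulting closed $3$-manifold is precisely $L(p,q)$.

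The most delicate step will be the interpretation of the determinant condition itself, because naively a presentation matrix for $H_1(L(p,q))=\mathbb Z/p$ should have determinant $\pm p$, not $\pm 1$. The resolution is that $A$ is not a bare presentation of $H_1$ but rather a normalized surgery-type matrix in which the rational coefficient $p/q$ on the distinguished torus has been cleared to integer entries by scaling the first row, and this scaling rescales the determinant by a compensating factor of $1/p$. The separate issue of matching the linking form — rather than only the first homology — is what fixes the lens space up to diffeomorphism, so the technical heart of the proof lies in verifying that the sign convention with $-q$ in the top row produces exactly the $L(p,q)$ linking form (and not, say, that of $L(p,-q)$ or of another $\mathbb Z/p$-manifold with isomorphic first homology).
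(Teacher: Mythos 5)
Your approach has genuine gaps, and two of them are fatal. First, your starting point --- that every homology cobordism over $\Sigma_{0,n+1}$ is obtained from $\Sigma_{0,n+1}\times[0,1]$ by surgery along exactly $n$ null-homologous framed knots whose framings and linking numbers populate the block $B$ --- is a strong structure theorem that you never prove and that nothing hands you for free; the paper's argument never needs such a normal form. Second, and worse, your mechanism for identifying the closed manifold with $L(p,q)$ is the ``classification of lens spaces by first homology together with the linking form,'' but no such classification exists: $L(7,1)$ and $L(7,2)$ (or $L(25,1)$ and $L(25,4)$) have isomorphic linking forms yet are not homeomorphic, so ``matching the linking form fixes the lens space up to diffeomorphism'' is false and the step translating ``$M=L(p,q)$'' into $\det A=\pm1$ collapses. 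Relatedly, your picture of the closure is off: pasting $\partial_+X$ to $\partial_-X$ by a boundary-fixing homeomorphism already produces a \emph{closed} manifold, so there are no leftover torus boundary components, no Dehn filling of slope $p/q$, and hence no ``compensating factor of $1/p$'' available to explain why the determinant should be $\pm1$ rather than $\pm p$.

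The paper's proof shows what $\det A=\pm1$ actually encodes, and it is not the identification of the closed manifold with $L(p,q)$ at all. One fixes $L(p,q)$ as $(-p/q)$-surgery on an unknot $U\subset S^3$, isotopes a spine $K'_1\cup v'_1\cup\cdots\cup K'_n$ of the candidate Seifert surface off the surgery-dual knot (a ``mixing diagram''), and computes in $H_1(S^3\setminus(N(U)\cup N(\cup_i K_i)))\cong\mathbb{Z}^{n+1}$, generated by the meridians $m,m_1,\dots,m_n$. The rows of the matrix are the surgery relation $pm-q\sum_i a_im_i$ and the longitude classes $a_im+t_im_i+\sum_{j\neq i}l_{i,j}m_j$; the determinant is $\pm1$ exactly when these $n+1$ classes form a basis of $\mathbb{Z}^{n+1}$, equivalently when the longitudes descend to a basis of the quotient $H_1(L(p,q)\setminus N(\cup_i K'_i))$. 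That is precisely the homology-cobordism condition on the complement of the surface; the ambient manifold is $L(p,q)$ by construction, so no linking-form argument is needed or wanted. This also repairs the defect in your converse: the integers $a_i$, $l_{i,j}$, $t_i$ do not determine a link, only linking numbers and framings, so your plan to ``verify via Kirby calculus that the resulting manifold is precisely $L(p,q)$'' is not well-posed. The paper instead realizes the prescribed linking data by \emph{any} choice of knots in $S^3$ disjoint from $U$; the closure is then $L(p,q)$ automatically, and the determinant condition, which depends only on the linking data, guarantees that the complement of the resulting surface is a homology cobordism.
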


Note that since any manifold which has a realization as a closure of a homology cobordism over $\Sigma_{0,1}$ is a homology 3-sphere, no lens spaces have such a realization. 
Note also that if we have a solution of $(1)$ for some $n$, by setting $a_{n+1}=l_{i,n+1}=0$ for all $i$ and $t_{n+1}=1$ we get that for $n+1$. 
Hence a lens space which has a realization as a closure of homology cobordism over $\Sigma_{0,n+1}$ has that of $\Sigma_{0,n+2}$, too. 
Because of this reason, it seems meaningful to find the minimal $n$ of $(1)$ for a given lens space. 
However, this question is completely answered as follows:

\vspace{1.0cm}
 \begin{thm} \label{thm1}
 $L(p,q)$ has a realization as a closure of a homology cobordism over $\Sigma_{0,2}$ 
if and only if $q$ or $-q$ is a quadratic residue modulo $p$.
 \end{thm}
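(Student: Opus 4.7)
The plan is to specialize Proposition \ref{prop} to the case $n=1$ and translate the resulting determinantal condition into an elementary congruence, then interpret that congruence in terms of quadratic residues.

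First, I will apply Proposition \ref{prop} with $n=1$. The matrix in (1) becomes the $2\times 2$ matrix
\[
\begin{pmatrix} p & -qa_1 \\ a_1 & t_1 \end{pmatrix},
\]
whose determinant is $pt_1 + qa_1^2$. Hence $L(p,q)$ admits a realization as a closure of a homology cobordism over $\Sigma_{0,2}$ if and only if there exist integers $a_1, t_1$ with
\[
pt_1 + qa_1^2 = \pm 1.
\]

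Next, I will unpack this Diophantine equation modulo $p$. If a solution exists with right-hand side $+1$, then $qa_1^2 \equiv 1 \pmod p$; in particular $a_1$ is invertible modulo $p$, and $q \equiv (a_1^{-1})^2 \pmod p$, so $q$ is a quadratic residue modulo $p$. If instead the right-hand side is $-1$, the same reasoning gives $-q \equiv (a_1^{-1})^2 \pmod p$, so $-q$ is a quadratic residue modulo $p$. For the converse, suppose $q \equiv c^2 \pmod p$ with $\gcd(c,p)=1$ (which is automatic since $\gcd(p,q)=1$); taking $a_1$ to be any integer representative of $c^{-1}\bmod p$, one has $qa_1^2 \equiv 1 \pmod p$, so there is an integer $t_1$ making $pt_1 + qa_1^2 = 1$. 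The case where $-q$ is a quadratic residue is symmetric and yields the value $-1$ on the right.

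Since the argument is a direct computation with a $2\times 2$ determinant, I do not anticipate a genuine obstacle; the only point that requires a line of care is checking that $\gcd(a_1,p)=1$ is forced by the equation (so that $a_1^{-1} \bmod p$ makes sense in the forward direction), and that Proposition \ref{prop} already supplies the ``if and only if'' reduction to the existence of integers solving (1). The small-$p$ cases ($p=1,2$) are covered trivially, as every residue is a square modulo $1$ or $2$, and the equation $pt_1+qa_1^2=\pm 1$ is visibly solvable in those cases.
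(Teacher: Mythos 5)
Your proposal is correct and follows essentially the same route as the paper: specialize Proposition \ref{prop} to $n=1$, reduce the determinant condition to the equation $pt_1+qa_1^2=\pm 1$, observe that this forces $\gcd(a_1,p)=1$, and translate the resulting congruence $qa_1^2\equiv\pm 1 \pmod p$ into the statement that $q$ or $-q$ is a quadratic residue modulo $p$. Your write-up is in fact slightly more careful than the paper's, since you spell out the converse direction (constructing $a_1$ from a square root of $\pm q$ and then solving for $t_1$) rather than compressing it into a chain of equivalences.
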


 \begin{thm} \label{thm2}
 Every lens space has a realization as a closure of a homology cobordism over $\Sigma_{0,3}$.
 \end{thm}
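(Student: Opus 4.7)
\emph{Plan.} The plan is to prove Theorem~\ref{thm2} by constructing an explicit integer solution to the $n=2$ case of equation $(1)$ in Proposition~\ref{prop}, using Theorem~\ref{thm1} to reduce to a hard case and then Chebotarev density to produce a prime with prescribed quadratic residue behavior.

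\emph{Reduction and ansatz.} If $q$ or $-q$ is a quadratic residue modulo $p$, Theorem~\ref{thm1} together with the doubling remark after Proposition~\ref{prop} (pad a $\Sigma_{0,2}$-solution with $a_2=l_{1,2}=0$ and $t_2=1$) already yields the conclusion. Thus I may assume neither $q$ nor $-q$ is a QR mod $p$. Under the ansatz $l_{1,2}=0$ and $a_2=1$, equation $(1)$ becomes
\[
pt_1t_2 + q(a_1^2 t_2 + t_1) \;=\; \pm 1.
\]
Writing $A := pt_1 + qa_1^2$, this reads $At_2 + qt_1 = \pm 1$, so an integer $t_2$ exists iff $qt_1 \equiv \pm 1 \pmod A$; multiplying by $p$ and replacing $pt_1$ by $-qa_1^2 \pmod A$ rewrites the condition as the quadratic residue statement $(qa_1)^2 \equiv \mp p \pmod A$.

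\emph{Construction via a prime $r$.} I aim to realize $A$ as a prime $r$. Given a prime $r$ with $-p$ a QR mod $r$, I seek an integer $a_1$ satisfying both $a_1^2 \equiv -pq^{-2} \pmod r$ (the QR condition) and $a_1^2 \equiv rq^{-1} \pmod p$ (the integrality condition for $t_1 := (r - qa_1^2)/p$). By the Chinese Remainder Theorem such $a_1$ exists precisely when
\[
\text{(I) } -p \text{ is a QR mod } r,
\qquad
\text{(II) } rq^{-1} \text{ is a QR mod } p,
\]
hold simultaneously (the sign $-1$ on the right-hand side of $(1)$ is obtained by replacing $-p$ by $+p$). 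Once $a_1$ is found, $t_2 := (1-qt_1)/r$ is automatically an integer and a direct substitution confirms that the determinant equals $\pm 1$. So everything reduces to producing a prime $r$ satisfying (I) and (II).

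\emph{Applying Chebotarev, and the main obstacle.} Both (I) and (II) are Frobenius conditions in the abelian extension $K = \mathbb{Q}(\sqrt{-p}, \zeta_m)/\mathbb{Q}$ for a suitable modulus $m$ encoding the quadratic-residue structure of $(\mathbb{Z}/p\mathbb{Z})^\times$; the Chebotarev density theorem produces infinitely many primes $r$ with prescribed Frobenius class, provided the class is nonempty. The crux of the proof is thus the compatibility check for (I)$\wedge$(II). For $p$ an odd prime, the standing hypothesis that $\pm q$ are both non-QR forces $\bigl(\frac{-1}{p}\bigr)=1$, i.e.\ $p\equiv 1\pmod 4$; quadratic reciprocity then shows that any prime $r$ with $r \equiv q \pmod p$ and $r \equiv 3 \pmod 4$ satisfies both conditions, and Dirichlet's theorem (a special case of Chebotarev) supplies such a prime. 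For general $p$—in particular for $p$ divisible by a high power of $2$ or having several odd prime factors—the same analysis is performed prime-power-by-prime-power, exploiting the freedom to swap $-p$ for $+p$ (equivalently to flip the sign on the right-hand side of $(1)$) in order to guarantee a valid residue class. This final case analysis, rather than any of the routine algebraic manipulations, is where the real work lies.
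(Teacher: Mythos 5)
Your algebraic reduction is correct, and it is in fact a pleasant simplification of the paper's: the ansatz $l_{1,2}=0$, $a_2=1$ turns equation $(1)$ into $t_2(pt_1+qa_1^2)+qt_1=\pm 1$ and bypasses the paper's appeal to the representation theorem for binary quadratic forms (its Lemma~\ref{lem1}); your conditions (I), (II) are essentially the paper's final congruence $p \equiv \epsilon z^2 \pmod{q+kp}$ in disguise. Your treatment of the odd prime case is also correct, and it actually extends to every $p \not\equiv 0 \pmod 4$ without the hard-case hypothesis: for a prime $r \equiv q \pmod p$ with $r \equiv 3 \pmod 4$, condition (II) holds trivially and $\left(\frac{-1}{r}\right)=-1$ forces exactly one of $\pm p$ to be a quadratic residue mod $r$, so the sign freedom finishes the job.

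The genuine gap is the case $4 \mid p$, which you defer to a ``prime-power-by-prime-power'' analysis based on ``the freedom to swap $-p$ for $+p$''; that mechanism provably cannot work there. When $4 \mid p$, condition (II) pins down $r$ modulo $4$: any admissible $r$ satisfies $r \equiv qa_1^2 \equiv q \pmod 4$ (the relevant $a_1$ is a unit mod $p$, hence odd). So if $q \equiv 1 \pmod 4$, every admissible $r$ has $r \equiv 1 \pmod 4$, whence $\left(\frac{-p}{r}\right)=\left(\frac{p}{r}\right)$ and the sign swap is vacuous; worse, the common value can be forced to be $-1$. Concretely, take $L(8,5)$, a genuine hard case since neither $\pm 5$ is a square mod $8$: condition (II) forces $r \equiv 5 \pmod 8$, and then $\left(\frac{\pm 8}{r}\right)=\left(\frac{2}{r}\right)=-1$ for every such prime, so no prime $r$ satisfying (I) and (II) exists and your construction never terminates. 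The missing idea is exactly what the paper's Fact~\ref{fact2} and Lemma~\ref{make_prime} supply: pass to the homeomorphic model $L(p,q)\cong L(p,r)$ with $ps-qr=1$. Since $qr \equiv -1 \pmod 4$ when $4 \mid p$, one of the two parameters $q,r$ is $\equiv 3 \pmod 4$, and running your argument with that parameter restores the sign trick (for $L(8,5)\cong L(8,3)$ one finds, e.g., $r=11$, $a_1=13$, $t_1=-62$, $t_2=17$, giving determinant $1$). Without this switch of surgery parameter, your proof is incomplete in precisely the hardest case.
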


The remainder of this paper is organized as follows. 
In Section \ref{sec2}, we give preliminary terminologies. 
In Section \ref{sec3}, we prove Proposition \ref{prop}. 
Section \ref{sec4} is devoted to the proofs for theorems.
Finally, we give some applications for the invariant hc($\cdot$) defined in \cite{hc} in Section \ref{sec5}.

\subsection*{Acknowledgements}
The author would like to give his thanks to Prof. Takuya Sakasai for giving many meaningful
advices, and Prof. Yuta Nozaki for telling algebraic operations to the author.

\section{Preliminary} \label{sec2}
 \subsection{Homology cobordisms  (Section $2.4$ of \cite{homcob})}
A $\it {homology \ cobordism \ over \ \Sigma_{g,n}}$ ($n\geq 1$) is a triad ($X,\partial_{+}X,\partial_{-}X$), 
where $X$ is an oriented compact 3-manifold and $\partial_{+}X \cup \partial_{-}X$ is a partition of $\partial X$, and $\partial_{\pm} X$ are homeomorphic to $\Sigma_{g,n}$ satisfying:\\

\begin{itemize}
  \item $\partial_{+}X \cup \partial_{-}X =  \partial X$.
  \item $\partial_{+}X \cap \partial_{-}X =  \partial(\partial_{+}X)$.
  \item The induced maps $(i_{\pm})_{*}: H_{*}(\partial_{\pm} X) \rightarrow H_{*}(X)$ are isomorphisms, where $i_{\pm}: \partial_{\pm}X \rightarrow X$ is the inclusions.\\
\end{itemize}

Note that the third condition is equivalent to the condition that $X$ is connected and $i_{\pm}$ induce isomorphisms on $H_{1}(\cdot)$. 
Moreover by using the Poincar$\rm{\acute{e}}$ duality, we see that inducing isomorphism of one of $(i_{+})_{*}$ and $(i_{-})_{*}$ is sufficient. \\
By pasting $\partial_{+}X$ and $\partial_{-}X$ using any boundary-fixing homeomorphism, we get a closed 3-manifold $M$ and a surface $F$ in $M$, which is the image of $\partial_{+}X$. In this situation, we say $M$ is representable as a closure of a homology cobordism over $\Sigma_{g,n}$. 
The link $\partial F$ (usually together with $F$) is called a $homologically\ fibered\ link$ in $M$ and we call $F$ a $homologially\ fibered\ surface$. \\
By the above observations, for a link with a Seifert surface $F$ in a closed 3-manifold $M$ 
being a homologically fibered link with homologically fibered surface, it is enough to check that push-ups (or push-downs) of simple closed curves on $F$ which form a basis of $H_{1}(F)$ also form a basis of $H_{1}(M\setminus Int(F\times[0,1]))$.

 \subsection{Mixing diagrams}
$L(p,q)$ is obtained by the $\left( -\frac{p}{q}\right)$--Dehn surgery along the unknot $U$ in $S^3$. 
In $L(p,q)$, we see the knot $U'$, which is the image of $U$ under the surgery. 
For any graph (1--dimensional subcomplex of a trianglation of $L(p,q)$) $G'$, 
we can isotope $G'$ so that this is disjoint from $U'$. 
Therefore we get a graph $G$ in $S^3$, which becomes $G'$ after the surgery. 
We call $U \cup G$ in $S^3$ a $\it {mixing \ diagram}$ of $G'$ in $L(p,q)$.

\section{Proof of Proposition \ref{prop}} \label{sec3}
We fix a lens space $L(p,q)$. 
Suppose there exists a homologically fibered link in $L(p,q)$ with a Seifert surface $F$ which is homomorphic to $\Sigma_{0,n+1}$. 
We take a spine of $F$, $S'=K'_{1} \cup v'_{1} \cup K'_{2} \cup \cdots \cup v'_{n-1} \cup K'_{n}$ as in Figure \ref{fig:one}. When $n=1$, there are no $v'$-arcs. 
Note that $\{K'_{1}, K'_{2},\ldots ,K'_{n}\}$ forms a basis of $H_{1}(F)$. 
Accurately, $F$ is knotted $\Sigma_{0,n+1}$ in $L(p,q)$, thus we fix an embedding. 
Cutting $L(p,q)$ along $F$ is equivalent to remove $N(S)$: a small neighborhood of $S$, except for the information about a partition of the boundary. 
By choosing a longitude of $K'_i$ and a twist number of the band corresponding to $v'_{j}$, we can assign the information about a partition of the boundary. 

\begin{figure}[htbp]
 \begin{center}
  \includegraphics[width=100mm]{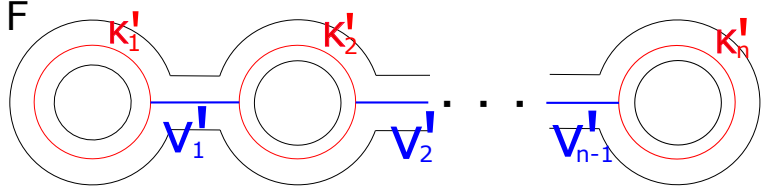}
 \end{center}
 \caption{$S'$, a spine of $F$}
 \label{fig:one}
\end{figure}

We take a mixing diagram $U\cup S$ of $S'$. 
We denote by $K_{i}$ the loop of $S$ corresponding to $K'_{i}$, and by $v_{i}$ the arc corresponding to $v'_{i}$. 
We take meridian curves of $N(S)$, $m_{i}$ which is dual to $K_{i}$, and $x_{i}$ which is dual to $v_{i}$ as in Figure \ref{fig:two}. 
Note that $x_i$ bounds a surface homeomorphic to $\Sigma_{i,1}$ in $S^3\setminus (N(U)\cup N(S))$, which lies on the boundary. 
We also take the meridian curve $m$ of $U$. 

\begin{figure}[htbp]
 \begin{center}
  \includegraphics[width=100mm]{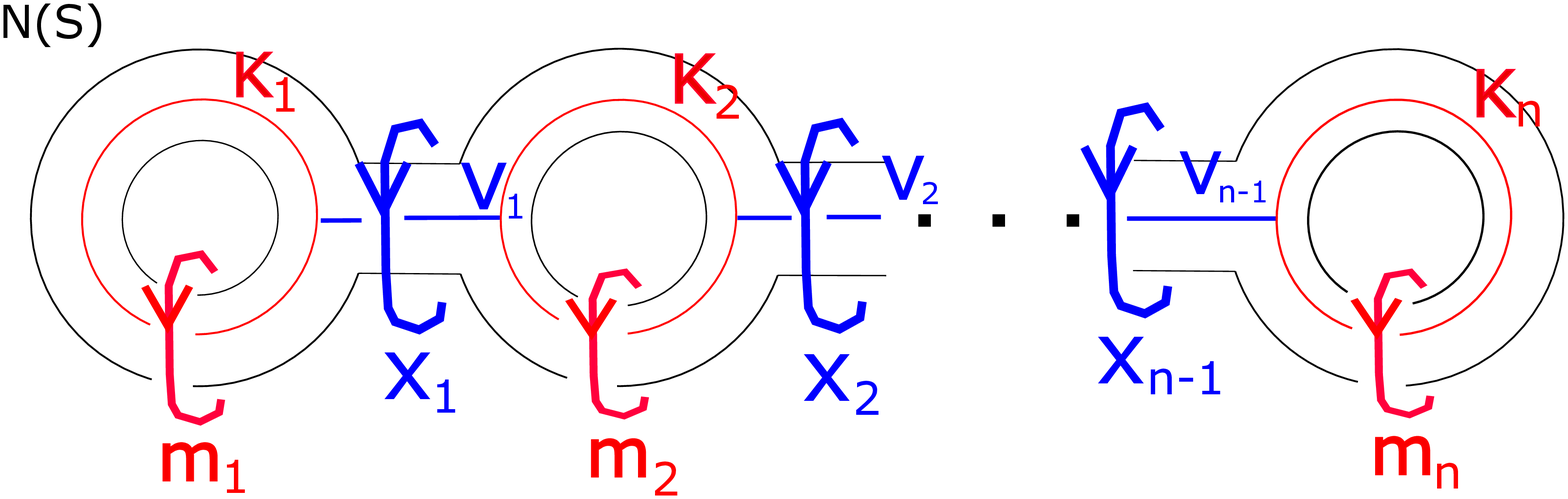}
 \end{center}
 \caption{meridian curves}
 \label{fig:two}
\end{figure}

Since $x_{i}$ is null-homologous in $S^3 \setminus (N(U) \cup N(S))$, by using Mayer--Vietoris exact sequence, we can see $H_{1}(S^3 \setminus (N(U) \cup N(S)))$ is isomorphic to $H_{1}(S^3 \setminus (N(U)\cup N(\cup^{n}_{i=1}K_i )))$, which is isomorphic to $\mathbb{Z}^{n+1}$ and generated by $\{m,m_1,\ldots ,m_n\}$. 
After $\left( -\frac{p}{q} \right)$--surgery along $U$, we see $H_{1}(L(p,q)\setminus N(S'))$ is isomorphic to $H_{1}(L(p,q) \setminus N(\cup^{n}_{i=1}K'_i))$. \\

We will compute $H_{1}(L(p,q) \setminus N(\cup^{n}_{i=1}K'_i))$ from $H_{1}(S^3 \setminus (N(U)\cup N(\cup^{n}_{i=1}K_i )))$. 
We denote by $a_{i}$ the linking number of $U$ and $K_{i}$. 
Then, the canonical longitude of $U$ is represented by $\Sigma^{n}_{i=1} a_{i}m_{i}$ in $H_{1}(S^3 \setminus (N(U)\cup N(\cup^{n}_{i=1}K_i )))$. 
Therefore, $H_{1}(L(p,q) \setminus N(\cup^{n}_{i=1}K'_i))$ is isomorphic to $\langle m,m_1,\cdots ,m_n \rangle /(pm-q\Sigma^{n}_{i=1} a_{i}m_{i})$, which is isomorphic to $\mathbb{Z}^{n}\bigoplus \mathbb{Z}/\gcd(p,qa_1,\cdots ,qa_n)\mathbb{Z}$, where $\gcd$ represents the greatest common divisor. 
We fix this presentation of $H_{1}(L(p,q) \setminus N(\cup^{n}_{i=1}K'_i))$.\\

Since $L(p,q\setminus (F\times[0,1]))$ is a homology cobordism over $\Sigma_{0,n+1}$, $H_{1}(L(p,q)\setminus (F\times [0,1])) \cong H_{1}(L(p,q)\setminus N(S')) \cong H_{1}(L(p,q) \setminus N(\cup^{n}_{i=1}K'_i))$ must be isomorphic to $\mathbb{Z}^n$. 
Hence, $\gcd(p,qa_1,\cdots ,qa_n)=1$.\\

Moreover, being a homology cobordism over $\Sigma_{0,n+1}$, there must be a longitude $\bar{K'_{i}}$ for each $K'_{i}$ such that $\{\bar{K'_{1}},\cdots ,\bar{K'_n}\}$ forms a basis of $H_{1}(L(p,q)\setminus N(S'))$. 
In $S^3$, the longitude $\bar{K'_{i}}$ corresponds to a longitude $\bar{K_i}$ for $K_i$. 
By setting $l_{i,j}$ for $i\neq j$ be the linking number of $K_i$ and $K_{j}$ ($l_{i,j}=l_{j,i}$), 
this $\bar{K_i}$ is represented by using some integer $t_{i}$ as $t_{i}m_{i}+a_{i}m+\Sigma_{j\neq i}l_{i,j}m_{j}$ in $H_{1}(S^3\setminus (N(U)\cup N(\cup^{n}_{i=1}K_i ))) \cong \mathbb{Z}^{n+1}$. 
For 
\begin{eqnarray*}
\{pm-q\Sigma^{n}_{i=1}a_{i}m_i, t_{1}m_{1}+a_{1}m+\Sigma_{j\neq 1}l_{1,j}m_{j}, t_{2}m_{2}+a_{2}m+\Sigma_{j\neq 2}l_{2,j}m_{j},\cdots ,t_{n}m_{n}+a_{n}m+\Sigma_{j\neq n}l_{n,j}m_{j}\}
\end{eqnarray*}
  forming a basis of $H_{1}(L(p,q) \setminus N(\cup^{n}_{i=1}K'_i)) \cong \mathbb{Z}^{n}\bigoplus \mathbb{Z}/\gcd(p,qa_1,\cdots ,qa_n)\mathbb{Z}$, the following equation must be satisfied:\\

\begin{eqnarray}
\left|
\begin{array}{ccccc}
   p      &   -qa_{1}  &   -qa_{2}   & \cdots &   -qa_{n}\\
   a_{1}  &     t_{1}    &   l_{1,2}    &  \cdots &   l_{1,n} \\
   a_{2}  &    l_{2,1}   &    t_{2}     & \cdots  &   l_{2,n}\\
\vdots &  \vdots    &   \vdots  &  \ddots & \vdots \\
   a_{n}  &   l_{n,1}    &     l_{n,2}  &  \cdots &     t_{n}\\
\end{array}
\right| = \pm 1 \nonumber   
\end{eqnarray}

By using the relations $l_{i,j}=l_{j,i}$ for $i\neq j$, we obtain equation (1). \\

Conversely, if we have integers $a_{h}$, $l_{i,j}$ and $t_k$($h,i,j,k=1,2,\dots,n$, $i\lneq j$) satisfying the equation (1), 
we can construct framed knots $K_{1}, K_{2},\cdots ,K_{n}$ in $S^3$ which are disjoint from $U$ satisfying the following:\\
\begin{itemize}
 \item the linking number of $U$ and $K_i$ is $a_{i}$.
 \item the linking number of $K_i$ and $K_j$ for $i\lneq j$ is $l_{i,j}$.
 \item the framing of $K_i$ is the $t_i$-times twisting of the canonical longitude of $K_i$ along the meridian disk.
\end{itemize}
Reversing the above computation, we can construct a homologically fibered link with Seifert  surface which is homeomorphic to $\Sigma_{0,n+1}$. \\

\section{Proof of theorems } \label{sec4}
\subsection{Proof of Theorem \ref{thm1}}
 
By Proposition \ref{prop}, that $L(p,q)$ is realizable as a closure of a homology cobordism over $\Sigma_{0,2}$ is equivalent to that there exist integers $a$ and $t$ satisfying 
$
\left|
\begin{array}{cc}
   p      &   -qa \\
   a &     t\\
\end{array}
\right| = \pm 1
$. 
This equation is equivalent to $tp+qa^{2}=\pm 1$. Note that this implies $p$ and $a^2$ are coprime and therefore $p$ and $a$ are coprime. 
This $a$ satisfies $qa^{2} \equiv \pm 1$ mod $p$. 
Since $p$ and $a$ are coprime, this is equivalent to $q \equiv \pm {(a^{-1})}^2$  mod $p$. 
Therefore the condition for $L(p,q)$ being realizable as a closure of a homology cobordism over $\Sigma_{0,2}$ is equivalent to that $q$ or $-q$ is quadratic residue modulo $p$.

\subsection{Proof of Theorem \ref{thm2}}
We fix a lens space $L(p,q)$ with $p,q > 0$ (every lens space has such a representation). 
We fix a positive integers $s$ and $r$ such that $ps-qr=1$. \\\\

By Proposition \ref{prop}, that $L(p,q)$ is realizable as a closure of a homology cobordism over $\Sigma_{0,2}$ is equivalent to that there exist integers $a_1,a_2,l_{1,2},t_1$, and $t_2$ satisfying 
$
\left|
\begin{array}{ccc}
   p      &   -qa_1  &   -qa_2 \\
   a_1 &     t_1    &    l_{1,2}\\
   a_2 &     l_{1,2}&     t_2  \\
\end{array}
\right| = \pm 1
$. 
This equation is equivalent to $p(t_{1}t_{2}-l^{2}_{1,2})-q(2l_{1,2}a_{1}a_{2}-t_{2}a^{2}_{1}-t_{1}a^{2}_{2})=\pm1$, and also equivalent to 
 \begin{eqnarray}
  \begin{cases}
  t_{2}a^2_{1}-2l_{1,2}a_{1}a_{2}+t_{1}a^{2}_{2}=-\epsilon (r+kp) &\\
  \left|
  \begin{array}{cc}
   t_2       &  -l_{1,2}  \\
   -l_{1,2}  &    t_{1}   \\
  \end{array}
  \right| = \epsilon (s+kq) & 
 
  \end{cases}
 \end{eqnarray}\\

We use the following lemma (see Section 5.3 of \cite{quadra} for example). 
Here the determinant of $ax^{2}+2bxy+cy^{2}$ is $ac-b^2$.
\begin{lem} \label{lem1}
For $n,D\in \mathbb{Z}$, there is a binary quadratic form $f(x,y)\in \mathbb{Z}[x,y]$ with determinant $D$ such that $f(x,y)=n$ has a primitive solution if and only if the congruence equation $-z^{2} \equiv D$ mod $n$ has a solution. 
\end{lem}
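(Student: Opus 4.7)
The statement is a classical two-direction equivalence, and the plan is to attack each implication by an explicit construction of a quadratic form that matches the data, using the invariance of the determinant under unimodular substitutions.

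For the ``if'' direction, suppose $z\in\mathbb{Z}$ satisfies $-z^{2}\equiv D\pmod{n}$, so that $z^{2}+D=nC$ for some integer $C$. Define
\begin{equation*}
f(x,y)=nx^{2}+2zxy+Cy^{2}.
\end{equation*}
Its determinant is $nC-z^{2}=D$, and $f(1,0)=n$ with $\gcd(1,0)=1$ is visibly a primitive solution. So this direction is essentially free once the correct form is guessed.

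For the ``only if'' direction, assume $f(x,y)=ax^{2}+2bxy+cy^{2}$ has $ac-b^{2}=D$ and admits a primitive solution $(x_{0},y_{0})$ of $f=n$. Since $\gcd(x_{0},y_{0})=1$, there exist integers $p,q$ with $px_{0}+qy_{0}=1$, so the matrix $M=\left(\begin{smallmatrix} x_{0}&-q\\ y_{0}&p\end{smallmatrix}\right)$ lies in $SL_{2}(\mathbb{Z})$. Apply the change of variables $(x,y)=M(u,v)$ to obtain a new form $\tilde f(u,v)=Au^{2}+2Buv+Cv^{2}$. By direct expansion $A=f(x_{0},y_{0})=n$, and because $M$ has determinant $1$ the determinant of the quadratic form is preserved, giving $AC-B^{2}=D$, i.e.\ $B^{2}=nC-D$. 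Thus $B^{2}\equiv -D\pmod{n}$, and $z=B$ is the desired solution of the congruence.

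The two directions are essentially dual: the same identity $B^{2}=nC-D$ appears in both, read once as a construction and once as a consequence of the $SL_{2}(\mathbb{Z})$-invariance of the determinant. There is no significant obstacle; the only point requiring care is verifying that completing a primitive vector $(x_{0},y_{0})$ to an element of $SL_{2}(\mathbb{Z})$ is possible (this is exactly the standard consequence of B\'ezout's identity) and that under such a substitution the coefficient $ac-b^{2}$ is genuinely invariant, which is immediate from $\det(M^{T}\!AM)=\det(A)$ applied to the symmetric matrix $\left(\begin{smallmatrix} a & b\\ b & c\end{smallmatrix}\right)$. A short remark to the reader that this is the standard ``change of first basis vector'' trick in the reduction theory of binary quadratic forms should suffice.
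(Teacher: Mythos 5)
Your proof is correct and follows essentially the same route as the paper: the ``if'' direction via the explicit form $nx^{2}+2zxy+Cy^{2}$, and the ``only if'' direction by completing the primitive solution $(x_{0},y_{0})$ to a matrix in $SL_{2}(\mathbb{Z})$ via B\'ezout and invoking invariance of the determinant under that substitution. The only difference is cosmetic (your matrix $M$ versus the paper's $U$, which differ by sign conventions in the B\'ezout identity), so there is nothing to add.
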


\begin{proof}
(if part)
We set $z_{0}, C_{0}\in \mathbb{Z}$ to be $D=-z^{2}_{0}+C_{0}n$. 
Then $f_{0}(x,y)=nx^{2}+2z_{0}xy+C_{0}y^2$, whose determinant is $D$ satisfies $f_{0}(1,0)=n$.\\
(only if part)
Suppose $f(x,y)=Ax^{2}+2Bxy+Cy^{2}$ has primitive solution $(x,y)=(x_0,y_0)$ for $f(x,y)=n$. 
Fix integers $\bar{x}, \bar{y}$ such that $x_{0}\bar{x}-y_{0}\bar{y}=1$. 
Then $g(x,y)=f\left( U\left( \begin{array}{c} x\\ y \end{array} \right) \right) =f(x_0,y_0)x^{2}+2B'xy+C'y^2$, where $U=\left( \begin{array}{cc} x_0 & \bar{y} \\ y_0 & \bar{x} \end{array} \right)$, and $B', C'$ are some integers, has the same determinant $D$ as $f(x,y)$. 
Thus $D=C'n-B'^{2}$. This implies $-B'^{2} \equiv D$ mod $n$. 
\end{proof}

By the above lemma, the existence of a solution of (2) is equivalent to the existence of integers $k$ and $z'$, non-zero integer $w$ and $\epsilon {'}$, which is $+1$ or $-1$  satisfying the following equation:
\begin{eqnarray*}
 \begin{cases}
-z'^2 \equiv \epsilon' (s+kq) \ \ \ \rm{mod} \ \ -\epsilon' \frac{r+kp}{w^2} &\\
w^{2}\mid r+kp&
 \end{cases}
\end{eqnarray*}
Note that since $(s+kq)$ and $(r+kp)$ are coprime, $z'^2$ and therefore $z'$ are prime to $(r+kp)$. 
This equation is equivalent to the following
 by setting $z=z'^{-1}$ and $\epsilon = \epsilon'$:
\begin{eqnarray}
 \begin{cases}
p \equiv \epsilon z^2 \ \ \ \rm{mod} \ \ \frac{r+kp}{w^2} &\\
w^{2}\mid r+kp& 
 \end{cases}
\end{eqnarray}
\\\\
Since $L(p,q) \cong L(p,r)$, we can replace $r$ in (3) with $q$. \\
We will show that for all $(p,q)$, there exist integers satisfying (3) by using the following two facts, which and whose use are fully explained in Section $3$ of \cite{nozaki}:
\\
Put $K_1 = \mathbb{Q}(\zeta_p)$ and $K_2 = \mathbb{Q}(\sqrt{-1})$, where $\zeta_p = \exp(\frac{2\pi}{p} \sqrt{-1})$.

\begin{fact} \label{fact1}
(A special case of Chebotarev density theorem, for example Theorem $10$ of \rm{ \cite{density}} )\\
If there exists $\sigma \in Gal(K_{1}K_{2}/\mathbb{Q})$ satisfying:\\
\begin{itemize}
 \item $\sigma |_{K_1}$ is $[\zeta_{p} \longmapsto \zeta^{m}_{p}]$, where $m$ is prime to $p$.
 \item $\sigma |_{K_2}$ is $[\sqrt{-1} \longmapsto -\sqrt{-1}]$.
\end{itemize}
then there exist infinitely many integers $k$ satisfying:\\
\begin{itemize}
 \item $m+kp$ is prime.\\
 \item $m+kp \equiv -1$ mod $4$
\end{itemize}
\end{fact}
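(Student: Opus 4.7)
The plan is to derive Fact \ref{fact1} as the standard consequence of Chebotarev's density theorem applied to the abelian extension $K_1K_2/\mathbb{Q}$, where the reformulation in the second bullet of the conclusion is just the prime condition from Dirichlet's theorem. First I would identify the Galois group: since $K_1K_2=\mathbb{Q}(\zeta_p,\sqrt{-1})$ sits inside the cyclotomic field $\mathbb{Q}(\zeta_N)$ for $N=\mathrm{lcm}(4,p)$, the extension $K_1K_2/\mathbb{Q}$ is abelian, and the pair of restriction maps give an injection $\mathrm{Gal}(K_1K_2/\mathbb{Q})\hookrightarrow \mathrm{Gal}(K_1/\mathbb{Q})\times \mathrm{Gal}(K_2/\mathbb{Q})\cong (\mathbb{Z}/p\mathbb{Z})^\times\times\{\pm 1\}$. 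The hypothesis that the prescribed $\sigma$ exists is exactly the statement that the pair corresponding to $(m,-1)$ lies in the image of this injection; this is automatic when $p$ is odd (where $K_1\cap K_2=\mathbb{Q}$), and for $p$ even amounts to a compatibility check.

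Next I would translate the Frobenius condition into congruences. For any rational prime $\ell$ unramified in $K_1K_2$ (i.e.\ $\ell\nmid 2p$), the Frobenius $\mathrm{Frob}_\ell\in\mathrm{Gal}(K_1K_2/\mathbb{Q})$ is uniquely characterized by $\zeta_p\mapsto \zeta_p^{\ell}$ on $K_1$ and $\sqrt{-1}\mapsto (\sqrt{-1})^{\ell}$ on $K_2$. Hence $\mathrm{Frob}_\ell=\sigma$ if and only if $\ell\equiv m\pmod p$ and $\ell\equiv -1\pmod 4$.

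Then I would invoke Chebotarev's density theorem. Because $K_1K_2/\mathbb{Q}$ is Galois, the set of unramified primes $\ell$ with $\mathrm{Frob}_\ell=\sigma$ has positive Dirichlet density $1/[K_1K_2:\mathbb{Q}]$ and is in particular infinite. Every such $\ell$ is of the form $\ell=m+kp$ for some integer $k$, and simultaneously satisfies $\ell\equiv -1\pmod 4$. This yields the desired infinitely many $k$.

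The main obstacle is not the density theorem itself (the abelian case reduces to Dirichlet) but the bookkeeping that ensures the two congruence conditions can be imposed simultaneously; the assumption on the existence of $\sigma$ is precisely what rules out the obstruction coming from a possible non-trivial intersection $K_1\cap K_2$ (which can occur only when $4\mid p$, in which case $\sqrt{-1}\in K_1$ and $\sigma|_{K_2}$ is forced by $\sigma|_{K_1}$). Once this compatibility is checked, the conclusion follows immediately from the Chebotarev statement cited as Theorem $10$ of \cite{density}.
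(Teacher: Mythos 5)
Your proposal is correct and takes essentially the same route as the paper: the paper offers no proof of this fact, stating it as a direct consequence of Chebotarev's density theorem (citing Theorem 10 of \cite{density}), and your argument is exactly the standard unwinding of that citation --- identifying, for unramified primes $\ell$, the condition $\mathrm{Frob}_\ell = \sigma$ with the congruences $\ell \equiv m \pmod{p}$ and $\ell \equiv -1 \pmod{4}$, then applying the density theorem to the abelian extension $K_1K_2/\mathbb{Q}$ to get infinitely many such $\ell = m + kp$. Your closing remark that the hypothesis on $\sigma$ is precisely what handles the possible intersection $K_1 \cap K_2 \neq \mathbb{Q}$ when $4 \mid p$ is also exactly the role this hypothesis plays in the paper's Lemma \ref{make_prime}.
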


\begin{fact} \label{fact2}
(See Corollary $4.5.4$ of {\rm{ \cite{cycle}} }for example)\ 
$\sqrt{-1} \in \mathbb{Q}(\zeta_{p})$ if and only if $4\mid p$.
\end{fact}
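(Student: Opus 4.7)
The plan is to prove both implications of the equivalence $\sqrt{-1}\in\mathbb{Q}(\zeta_p)\iff 4\mid p$ directly, using two standard tools: the compositum formula $\mathbb{Q}(\zeta_m)\cdot\mathbb{Q}(\zeta_n)=\mathbb{Q}(\zeta_{\mathrm{lcm}(m,n)})$ and the multiplicativity of Euler's totient $\varphi$.

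For the ``if'' direction, if $p=4k$ then $\zeta_p^{k}$ is a primitive $4$th root of unity and hence equals $\pm\sqrt{-1}$, so $\sqrt{-1}\in\mathbb{Q}(\zeta_p)$ at once. This step is immediate and needs no further argument.

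For the ``only if'' direction, I would assume $\sqrt{-1}\in\mathbb{Q}(\zeta_p)$. Since $\mathbb{Q}(\sqrt{-1})=\mathbb{Q}(\zeta_4)$, this gives $\mathbb{Q}(\zeta_4)\subseteq\mathbb{Q}(\zeta_p)$, and so by the compositum formula $\mathbb{Q}(\zeta_{\mathrm{lcm}(4,p)})=\mathbb{Q}(\zeta_4)\cdot\mathbb{Q}(\zeta_p)=\mathbb{Q}(\zeta_p)$. Comparing degrees over $\mathbb{Q}$ via $[\mathbb{Q}(\zeta_n):\mathbb{Q}]=\varphi(n)$ yields $\varphi(\mathrm{lcm}(4,p))=\varphi(p)$. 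I would then split on the $2$-adic valuation of $p$: if $p$ is odd then $\mathrm{lcm}(4,p)=4p$ and $\varphi(4p)=2\varphi(p)\neq\varphi(p)$; if $p\equiv 2\pmod 4$, writing $p=2m$ with $m$ odd, then $\mathrm{lcm}(4,p)=4m$ and $\varphi(4m)=2\varphi(m)=2\varphi(p)\neq\varphi(p)$. Both cases contradict the degree equality, so the only possibility is $v_{2}(p)\geq 2$, i.e.\ $4\mid p$.

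I do not expect a serious obstacle; the one point that requires a little care is the case $p\equiv 2\pmod 4$, where one needs the identity $\varphi(2m)=\varphi(m)$ for odd $m$ in order to derive a contradiction on the same footing as the odd case. The compositum identity itself is standard and may be quoted from any text on cyclotomic fields; alternatively, one could phrase the argument Galois-theoretically, using that $\mathrm{Gal}(\mathbb{Q}(\zeta_p)/\mathbb{Q})\cong(\mathbb{Z}/p\mathbb{Z})^{\times}$ must admit a quotient of order $2$ coming from $(\mathbb{Z}/4\mathbb{Z})^{\times}$ via reduction, but the totient-comparison approach above is the most economical.
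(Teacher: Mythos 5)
Your proposal is correct, but note that the paper itself contains no proof of this statement: it is quoted as a Fact, outsourced to Corollary 4.5.4 of Weintraub's \emph{Galois theory}, and then used as a black box in the case analysis ($4\mid p$ versus $4\nmid p$) of the subsequent lemma on finding primes of the form $q+kp$ or $r+kp$. Your argument therefore supplies what the paper only cites. The route you take is the standard textbook one: the ``if'' direction is immediate since $\zeta_p^{k}=\exp\bigl(\tfrac{2\pi}{4k}\sqrt{-1}\cdot k\bigr)=\sqrt{-1}$ when $p=4k$, and the ``only if'' direction follows from the compositum identity $\mathbb{Q}(\zeta_4)\cdot\mathbb{Q}(\zeta_p)=\mathbb{Q}(\zeta_{\operatorname{lcm}(4,p)})$ together with the degree formula $[\mathbb{Q}(\zeta_n):\mathbb{Q}]=\varphi(n)$. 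Both of your cases are handled correctly, and you rightly isolate $\varphi(2m)=\varphi(m)$ for odd $m$ as the point needing care when $p\equiv 2\pmod{4}$; the edge cases $p=1,2$ cause no trouble under the convention $\mathbb{Q}(\zeta_1)=\mathbb{Q}(\zeta_2)=\mathbb{Q}$. What the citation buys the paper is brevity; what your proof buys is self-containedness, at the cost of invoking two standard inputs (the compositum formula and the degree formula), both legitimate to quote. One caveat: the Galois-theoretic alternative you sketch at the end is under-specified as stated --- every $(\mathbb{Z}/p\mathbb{Z})^{\times}$ with $p>2$ admits a quotient of order $2$, so the mere existence of such a quotient yields no contradiction; one would have to identify the specific quotient cut out by $\mathbb{Q}(\sqrt{-1})$ with reduction modulo $4$ (or argue via ramification of $2$), which essentially reduces to the degree argument anyway. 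Since that aside is not part of your main argument, it does not affect correctness.
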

\vspace{0.5cm}
From these we have the following:
\begin{lem} \label{make_prime}
There exists an integer $k$ satisfying at least one of the following :\\
\begin{itemize}
 \item $q+kp$ is prime and $q+kp \equiv -1$ \rm{mod} $4$.\\
 \item $r+kp$ \it{is prime and} $r+kp \equiv -1$ \rm{mod} $4$.
\end{itemize}
\end{lem}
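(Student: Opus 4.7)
The plan is to apply Fact \ref{fact1} to $m=q$ or $m=r$, with the choice dictated by whether $\sqrt{-1}$ lies in $K_1$, i.e.\ by whether $4\mid p$ (Fact \ref{fact2}).

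First I would handle the case $4\nmid p$. Here $K_1\cap K_2=\mathbb{Q}$, so $\mathrm{Gal}(K_1K_2/\mathbb{Q})\cong \mathrm{Gal}(K_1/\mathbb{Q})\times \mathrm{Gal}(K_2/\mathbb{Q})$ and I can independently prescribe $\sigma|_{K_1}:\zeta_p\mapsto\zeta_p^q$ (which makes sense since $\gcd(p,q)=1$) and $\sigma|_{K_2}:\sqrt{-1}\mapsto -\sqrt{-1}$. Fact \ref{fact1} with $m=q$ then produces infinitely many $k$ with $q+kp$ prime and $q+kp\equiv -1\pmod 4$, giving the first alternative.

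Next I would handle the case $4\mid p$. Now $\sqrt{-1}=\zeta_p^{p/4}\in K_1$, so any automorphism of $K_1K_2$ with $\sigma|_{K_1}:\zeta_p\mapsto \zeta_p^m$ automatically satisfies $\sigma(\sqrt{-1})=\zeta_p^{mp/4}=i^m$, which equals $-\sqrt{-1}$ exactly when $m\equiv -1\pmod 4$. So I must find $m\in\{q,r\}$ with $m\equiv -1\pmod 4$. Reducing $ps-qr=1$ modulo $4$, using $4\mid p$, yields $qr\equiv -1\pmod 4$. Since $q$ and $r$ are then both odd, a quick check of the four possibilities mod $4$ shows that exactly one of $q,r$ is $\equiv -1\pmod 4$ (and the other is $\equiv 1\pmod 4$). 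Take $m$ to be that one; the desired $\sigma\in\mathrm{Gal}(K_1K_2/\mathbb{Q})$ exists (its restriction to $K_1$ determines and is compatible with its restriction to $K_2$), and Fact \ref{fact1} produces a $k$ giving the corresponding alternative in the lemma.

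There is no real obstacle beyond the bookkeeping of the $4\mid p$ case; the only substantive point is the parity observation that $ps-qr=1$ with $4\mid p$ forces exactly one of $q,r$ to be $\equiv -1\pmod 4$, which is what makes Chebotarev applicable with the required compatibility between the two restrictions.
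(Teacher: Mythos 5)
Your proposal is correct and follows essentially the same route as the paper: split on whether $4\mid p$, use Fact \ref{fact2} to determine $K_1\cap K_2$, note that when $4\mid p$ the relation $ps-qr=1$ forces one of $q,r$ to be $\equiv -1 \pmod 4$ (so that $\zeta_p\mapsto\zeta_p^m$ sends $\sqrt{-1}\mapsto-\sqrt{-1}$), and apply Fact \ref{fact1}. Your write-up just makes the mod-$4$ bookkeeping slightly more explicit than the paper does.
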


\begin{proof}
(i) the case when $4\nmid p$\\
In this case, $K_{1}\cap K_{2}=\mathbb{Q}$ by Fact \ref{fact2}. 
Thus in $Gal(K_{1}K_{2}/\mathbb{Q})$, $[\zeta_{p} \longmapsto \zeta^{q}_{p}]$ and $[\sqrt{-1} \longmapsto -\sqrt{-1}]$ are coexistable. 
By Fact \ref{fact1}, we have an integer $k$ such that $q+kp$ is prime and $q+kp \equiv -1$ mod $4$.\\
(ii) the case when $4 \mid p$.\\
We write $p=4p'$. In this case, $K_{1}K_{2}=K_{1}$ by Fact \ref{fact2}. 
Note that $\zeta^{p'}_p = \sqrt{-1}$. 
If $\zeta_p$ is mapped to $\zeta^{m}_p$ ($m$ is $q$ or $r$), then $\sqrt{-1}$ is mapped to $\zeta^{mp'}_p$, this is $\sqrt{-1}$ when $mp' \equiv p'$ mod $p$ and $-\sqrt{-1}$ when $mp' \equiv -p'$ mod $p$. 
Note that $mp' \equiv \pm p'$ mod $p$ is equivalent to $m \equiv \pm 1$ mod $4$. 
Since $ps-qr=1$, one of $q$ and $r$ is congruent to $-1$ modulo $4$. 
Therefore, one of $[\zeta_{p} \longmapsto \zeta^{q}_{p}]$ and $[\zeta_{p} \longmapsto \zeta^{r}_{p}]$ maps $[\sqrt{-1} \longmapsto -\sqrt{-1}]$. 
By Fact \ref{fact1}, there exists an integer $k$ s.t. 
$q+kp$ is prime and $q+kp \equiv -1$ mod $4$, or $r+kp$ is prime and $r+kp \equiv -1$ mod $4$.
 
\end{proof}

For $L(p,q)$, there exists an integer $q'$, which is $q+kp$ or $r+kp$ s.t. $L(p,q) \cong L(p,q')$, $q'$ is prime, and $q'  \equiv -1$ mod $4$ by Lemma \ref{make_prime}. 
By using the Legendre symbol and Euler's criterion,\\
$\left( \frac{-p}{q'} \right) \equiv \left( \frac{-1}{q'} \right) \left( \frac{p}{q'} \right) \equiv (-1)^{\frac{q'-1}{2}} \left( \frac{p}{q'} \right) \equiv -\left( \frac{p}{q'} \right) $ mod $q'$\\
This implies there exists $\epsilon \in \{\pm1\}$ such that $\left( \frac{\epsilon p}{q'} \right) =1$, i.e. 
$\epsilon p$ is quadratic residue modulo $q'$. 
Therefore, the condition (3) is satisfied.

\section{Applications for hc($\cdot$)} \label{sec5}
\begin{defini} \rm{(Remark 6.10 in \cite{hc})}\ 
For a connected orientable closed 3-manifold $M$, the integer hc($M$) is defined as the minimal number $g$ such that $M$ is representable as a closure of a homology cobordism over $\Sigma_{g,1}$.
\end{defini}

In \cite{nozaki}, it was proved that for any 3-manifold $M$ whose first homology group is non-trivial and generated by one element, hc($M$)$=1$. 
However in general for a 3-manifold $M$ whose first homology group has a non-trivial torsion, the computation of hc($M$) becomes difficult.\\
Since it is known that under plumbing operations, pairs of a 3-manifold and a homologically fibered link with Seifert surface in it are closed, we get some results about hc($\cdot$) as follows.

\begin{cor}
(1)\ $\rm{hc}\left( \it{L(p_1,q_1) \# L(p_2,q_2)}\right) \leq 1$ if $q_1$ is a quadratic residue modulo $p_1$ and $q_2$ is a quadratic residue modulo $p_2$.\\
(2)\ $\rm{hc}\left( \it{L(p,q) \# L(p_1,q_1) \# L(p_2,q_2)}\right) \leq 2$ if $q_1$ is a quadratic residue modulo $p_1$ and $q_2$ is a quadratic residue modulo $p_2$.
\end{cor}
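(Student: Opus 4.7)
The plan is to exploit the plumbing (Murasugi sum) closure property cited just before the statement: if $F_i \subset M_i$ is a homologically fibered surface for $i=1,2$, then the plumbing $F_1 * F_2 \subset M_1 \# M_2$ is again homologically fibered. It therefore suffices to produce Seifert surfaces of the right type in each lens space summand and plumb them together so that the result is $\Sigma_{g,1}$ for $g$ as small as possible.

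For (1), by Theorem \ref{thm1} the hypothesis on $q_1,q_2$ yields, for each $i$, a homologically fibered link in $L(p_i,q_i)$ with Seifert surface $F_i \cong \Sigma_{0,2}$. I Hopf-plumb $F_1$ and $F_2$ inside $L(p_1,q_1)\#L(p_2,q_2)$, arranging the plumbing square so that its two arcs on each $F_i$ lie on the two distinct boundary components of $F_i$. Then $\chi(F_1 * F_2)=\chi(F_1)+\chi(F_2)-1=-1$, and the four boundary arcs created by the plumbing join into a single circle, exactly as in the classical description of the trefoil fiber surface as a plumbing of two Hopf bands. Hence $F_1 * F_2 \cong \Sigma_{1,1}$, and $\mathrm{hc}(L(p_1,q_1)\#L(p_2,q_2))\leq 1$.

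For (2), Theorem \ref{thm2} provides a homologically fibered link in $L(p,q)$ with Seifert surface $F\cong\Sigma_{0,3}$, while Theorem \ref{thm1} again provides $F_i\cong\Sigma_{0,2}$ in $L(p_i,q_i)$. I first plumb $F$ with $F_1$ in $L(p,q)\#L(p_1,q_1)$, placing the two arcs of the plumbing square on two different components of $\partial F$ and on the two components of $\partial F_1$. Of the three boundary components of $F$, one survives unchanged, while the other two merge with the two components of $\partial F_1$ into a single new circle, so $F * F_1\cong\Sigma_{1,2}$. A second plumbing of $F * F_1$ with $F_2$, again placing the square's arcs on distinct boundary components on each side, puts us in the situation of part (1): the four new boundary arcs coalesce into one circle, yielding $\Sigma_{2,1}$ with $\chi=-3$ in $L(p,q)\#L(p_1,q_1)\#L(p_2,q_2)$, and hence $\mathrm{hc}\leq 2$.

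The main obstacle will be the combinatorial bookkeeping at each plumbing step: one must verify that the chosen placement of square-arcs really merges the affected boundary circles into a single one, rather than splitting them into two. This is a routine but necessary analysis of the four-arc cycle produced by the plumbing square, entirely analogous to the trefoil calculation invoked in (1), and it is the only step that is not immediately formal.
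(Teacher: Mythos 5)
Your proposal is correct and takes essentially the same approach the paper intends: the paper gives no explicit proof, only the preceding remark that homologically fibered surfaces are closed under plumbing, and your argument (plumbing the $\Sigma_{0,2}$ surfaces from Theorem \ref{thm1} and the $\Sigma_{0,3}$ surface from Theorem \ref{thm2}, with the square's arcs on distinct boundary circles so that the four boundary arcs merge into one circle, giving $\Sigma_{1,1}$ and $\Sigma_{2,1}$ respectively) is precisely the intended realization. The ``combinatorial bookkeeping'' you flag does check out: tracking the four corners of the plumbing square shows the complementary boundary arcs form a single cycle, exactly as in the trefoil example.
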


\vspace{0.5cm}

\ GRADUATE SCHOOL OF MATHEMATICAL SCIENCES, THE UNIVERSITY OF TOKYO, 3-8-1 KOMABA, MEGURO--KU, TOKYO, 153-8914, JAPAN\\
\ \ E-mail address: \texttt{sekino@ms.u-tokyo.ac.jp}

\end{document}